\documentclass[amssymb, 11pt]{amsart}
\usepackage{latexsym}
\usepackage{amsmath}
\usepackage[margin=.5in]{geometry}
\usepackage[round,authoryear]{natbib}

\usepackage{etoolbox} 
\usepackage{mathtools} 

\makeatletter
\def\cite{\citet}

\numberwithin{equation}{section}
\allowdisplaybreaks

\def\@noindentfalse{\global\let\if@noindent\iffalse}
\def\@noindenttrue {\global\let\if@noindent\iftrue}
\def\@aftertheorem{%
  \@noindenttrue
  \everypar{%
    \if@noindent%
      \@noindentfalse\clubpenalty\@M\setbox\z@\lastbox%
    \else%
      \clubpenalty \@clubpenalty\everypar{}%
    \fi}}

\theoremstyle{plain}
\newtheorem{theorem}{Theorem}[section]
\AfterEndEnvironment{theorem}{\@aftertheorem}

\AfterEndEnvironment{definition}{\@aftertheorem}

\AfterEndEnvironment{lemma}{\@aftertheorem}
\newtheorem{corollary}[theorem]{Corollary}
\AfterEndEnvironment{corollary}{\@aftertheorem}

\theoremstyle{definition}
\newtheorem{remark}[theorem]{Remark}
\AfterEndEnvironment{remark}{\@aftertheorem}

\AfterEndEnvironment{assumption}{\@aftertheorem}

\AfterEndEnvironment{example}{\@aftertheorem}

\AfterEndEnvironment{proof}{\@aftertheorem}

\newlength{\standardunitlength}
\def\be#1{\begin{equation*}#1\end{equation*}}
\def\ben#1{\begin{equation}#1\end{equation}}
\def\bes#1{\begin{equation*}\begin{split}#1\end{split}\end{equation*}}

\def\bs#1{\begin{split}#1\end{split}}
\def\ba#1{\begin{align*}#1\end{align*}}

\usepackage{delimset}
\def\given{\mskip 0.5mu plus 0.25mu\vert\mskip 0.5mu plus 0.15mu}
\newcounter{bracketlevel}%
\def\@bracketfactory#1#2#3#4#5#6{%
\expandafter\def\csname#1\endcsname##1{%
\global\advance\c@bracketlevel 1\relax%
\global\expandafter\let\csname @middummy\alph{bracketlevel}\endcsname\given%
\global\def\given{\mskip#5\csname#4\endcsname\vert\mskip#6}\csname#4l\endcsname#2##1\csname#4r\endcsname#3%
\global\expandafter\let\expandafter\given\csname @middummy\alph{bracketlevel}\endcsname%
\global\advance\c@bracketlevel -1\relax%
}%
}
\def\bracketfactory#1#2#3{%
\@bracketfactory{#1}{#2}{#3}{relax}{0.5mu plus 0.25mu}{0.5mu plus 0.15mu}
\@bracketfactory{b#1}{#2}{#3}{big}{1mu plus 0.25mu minus 0.25mu}{0.6mu plus 0.15mu minus 0.15mu}
\@bracketfactory{bb#1}{#2}{#3}{Big}{2.4mu plus 0.8mu minus 0.8mu}{1.8mu plus 0.6mu minus 0.6mu}
\@bracketfactory{bbb#1}{#2}{#3}{bigg}{3.2mu plus 1mu minus 1mu}{2.4mu plus 0.75mu minus 0.75mu}
\@bracketfactory{bbbb#1}{#2}{#3}{Bigg}{4mu plus 1mu minus 1mu}{3mu plus 0.75mu minus 0.75mu}
}
\bracketfactory{clc}{\lbrace}{\rbrace}
\bracketfactory{clr}{(}{)}
\bracketfactory{cls}{[}{]}
\bracketfactory{abs}{\lvert}{\rvert}
\bracketfactory{norm}{\Vert}{\Vert}
\bracketfactory{floor}{\lfloor}{\rfloor}
\bracketfactory{ceil}{\lceil}{\rceil}
\bracketfactory{angle}{\langle}{\rangle}

\let\original@left\left
\let\original@right\right
\renewcommand{\left}{\mathopen{}\mathclose\bgroup\original@left}
\renewcommand{\right}{\aftergroup\egroup\original@right}

\newcounter{ctr}\loop\stepcounter{ctr}\edef\X{\@Alph\c@ctr}%
	\expandafter\edef\csname s\X\endcsname{\noexpand\mathscr{\X}}
	\expandafter\edef\csname c\X\endcsname{\noexpand\mathcal{\X}}
	\expandafter\edef\csname b\X\endcsname{\noexpand\boldsymbol{\X}}
	\expandafter\edef\csname I\X\endcsname{\noexpand\mathbb{\X}}
\ifnum\thectr<26\repeat

0\loop\stepcounter{ctr}\edef\X{\@alph\c@ctr}%
	\expandafter\edef\csname bs\X\endcsname{\noexpand\boldsymbol{\X}}
\ifnum\thectr<26\repeat

\def\cite{\citet}

\makeatother

\newcommand{\g}{\gamma}

\renewcommand{\l}{\lambda}

\newcommand{\s}{\sigma}

\newcommand{\dtv}{\mathop{d_{\mathrm{TV}}}}
\newcommand{\law}{\mathop{{}\mathcal{L}}}

\newcommand{\Z}{\mathbb{Z}}

\newcommand{\Var}{\operatorname{Var}\mathopen{}}
\newcommand{\PP}{\mathop{{}\mathbb{P}}\mathopen{}}

\newcommand{\TP}{\mathop{\mathrm{TP}}\mathopen{}}
\newcommand{\EE}{\mathop{{}\mathbb{E}}\mathopen{}}
\newcommand{\I}{\mathop{{}\mathrm{I}}\mathopen{}}
\newcommand{\bigo}{\mathop{{}\mathrm{O}}\mathopen{}}
\renewcommand{\pmod}[1]{\ (\mathrm{mod}\ #1)}
\newcommand{\Po}{\mathop{\mathrm{Po}}\mathopen{}}

\begin{document}

\title [Stein's method and the modular behavior of Eulerian numbers] {Stein's method and the modular behavior of Eulerian numbers}

\author{Jason Fulman}
\address{Department of Mathematics, University of Southern California, Los Angeles, CA 90089-2532, USA}
\email{fulman@usc.edu}

\author{Adrian R\"ollin}
\address{Department of Statistics and Data Science, National University of Singapore, Singapore}
\email{adrian.roellin@nus.edu.sg}

\keywords{Stein's method, Eulerian numbers, cryptography}


\date{Version of March 21, 2026}

\begin{abstract}
The Eulerian number $A(n,k)$ counts permutations of $\{1,\dots,n\}$ with exactly $k$ descents. Motivated by problems in cryptography, several authors have studied the sum \be{\frac{1}{n!}\sum_{k \equiv j \, (\mathrm{mod}\, b)} A(n,k)} and its convergence to $1/b$. We give two proofs of explicit error bounds for this convergence, one using Stein's method for translated Poisson approximation and one using Fourier analysis. The error bound using Fourier analysis yields exponentially decaying error bounds for fixed $b$, which generalises the already known case $b=2$; however, it makes use of a special representation due to Tanny (1973). In contrast, Stein's method only yields polynomially decaying error bounds, but we hope it has potential for generalisation beyond the present setting.
\end{abstract}

\maketitle 

\section{Introduction}

We say that a permutation $\pi$ on $n$ symbols has a \emph{descent} at position $i$, where $1 \leq i \leq n-1$, if $\pi(i) > \pi(i+1)$. For example, the permutation $\underline{4} 1 \underline{5} 2 3$ has descents at positions $1$ and $3$, giving a total of $2$ descents. The \emph{Eulerian number} $A(n,k)$ is the number of permutations on $n$ symbols with exactly $k$ descents; for instance, $A(3,0)=1$, $A(3,1)=4$, $A(3,2)=1$. We refer to \citet{Pet} for a comprehensive treatment of Eulerian numbers, and to \citet{BD}, \citet{DFBook} and \citet{Ho} for their connections to card shuffling and carries in addition.

For $b \geq 2$ and $0 \leq j \leq b-1$, consider the quantity
\ben{ \label{eq:modular}
  \frac{1}{n!} \sum_{k \equiv j \pmod b} A(n,k),
}
which is the probability that the number of descents of a uniform random permutation of $\{1,\dots,n\}$ is congruent to $j$ modulo $b$. This quantity arises naturally in the cryptography literature on carries in addition; see \citet{StMe}, \citet{Sark} and \citet{Alq}. For $b=2$, the well known relation between Eulerian numbers and the Bernoulli numbers $B_n$ gives the exact formula
\be{
  \frac{1}{n!} \sum_{\text{$k$ even}} A(n,k) = \frac{1}{2} \bbbclr{ 1 + \frac{2^{n+1}(2^{n+1}-1) B_{n+1}}{(n+1)!} },
}
from which it follows by well known asymptotics of Bernoulli numbers that the convergence to $1/2$ is at rate $\bigo\bclr{(2/\pi)^n}$; see \citet{Alq}. Note that if $n \geq 2$ is even, $B_{n+1}=0$ so the error term is $0$ (which is also obvious from symmetry of the Eulerian numbers). For general fixed $b$, \citet[p.~721]{Tan} proved that~\eqref{eq:modular} converges to $1/b$ as $n \to \infty$. Tanny's argument uses the classical fact that $(1/n!) A(n,k)$ equals the probability that the sum of $n$ independent $\mathrm{Uniform}[0,1]$ random variables lies in $[k,k+1)$; the convergence then follows by wrapping this sum around a circle of circumference $b$ and applying the central limit theorem.

The goal of this note is to provide new explicit error bounds for this convergence. We provide two error bounds, one using Stein's method for translated Poisson approximation and one using Fourier analysis. Unlike for Tanny's original result, $b$ is allowed to grow with $n$. The proof via Stein's method combines two ingredients: a bound on the modular behavior of the Poisson distribution (Theorem~\ref{thm:poisson}), and a translated Poisson approximation for the number of descents via Stein's method of exchangeable pairs (Theorem~\ref{thm:descent}). The number of descents is well known to be asymptotically normal, and a proof via Stein's method was given by \citet{Fu}. However, since the number of descents is integer valued and we are interested in its residue modulo $b$, normal approximation is not directly useful; instead, we approximate the number of descents by the translated Poisson distribution in total variation, which for large variances serves as a discrete analogue of the normal distribution, and may be of independent interest. We apply \citet[Theorem~3.1]{Rol}, which provides a framework for translated Poisson approximation via exchangeable pairs, building on the exchangeable pair construction of~\citet{Fu}. We refer to \citet{Ross} for an introduction into Stein's method.

The second proof is a direct proof via Fourier analysis, but uses a very special representation of $A(n,r)$ in terms of independent uniform random variables due to \cite{Tan}. The resulting bound is exponentially small, and for $b=2$, our Corollary \ref{cor:fourier-easy} recovers Alqui\'{e}'s sharp rate $\bigo\bclr{(2/\pi)^n}$. However this argument uses independence in a crucial way whereas our Stein's method approach does not.

We conclude by noting that the cryptography papers cited above are also interested in the behavior of the $i$-th carry for fixed $i$. The discussion above concerns the $i \to \infty$ limit, in which Eulerian numbers arise. Theorem~3.4 of \citet{DiaFul} shows that if one works base $b$, then slightly more than $\frac{1}{2} \log_b(n)$ steps suffice for the total variation convergence of Holte's carries Markov chain when $n$ is large. For $i$ this large, the probability that the $i$-th carry is congruent to $j$ modulo $b$ is therefore close to~\eqref{eq:modular}.

\section{An error bound via Stein's method} 

Since we are working with permutations from the uniform distribution, it is equivalent to work with the
random variable $W(\pi)$ which is the number of descents of $\pi^{-1}$. This is equal to the number of $i$ ($1 \leq i \leq n-1)$
such that $i$ and $i+1$ are out of order in $\pi$.

\begin{theorem} \label{thm:main} Suppose that $n \geq 6$.
 Let $b \ge 2$ and $k \in \{0,1,\dots,b-1\}$. Then
\bes{
  \MoveEqLeft\bbbabs{ \frac{1}{n!} \sum_{r \equiv k \pmod b} A(n,r) - \frac{1}{b} }\\
  &\leq
  \sqrt{\frac{23}{5}} \cdot \frac{1}{\sqrt{n+1}} + \frac{24}{n+1} + \frac{b-1}{b}
  \exp\bbclr{-\frac{n+1}{12}\bclr{1-\cos(2\pi/b)}}.
}
\end{theorem}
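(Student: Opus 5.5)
The plan is to compare $W$ with a translated Poisson variable $Z=\lfloor\mu-\sigma^2\rfloor+Y$, where $Y\sim\Po(\lambda)$. Here $W$ is the number of descents of $\pi^{-1}$, which has the same law as the number of descents of $\pi$. Its mean is $\mu=(n-1)/2$ and its variance is $\sigma^2=(n+1)/12$. The parameter is $\lambda=\sigma^2+\langle\mu-\sigma^2\rangle$, where $\langle\cdot\rangle$ denotes the fractional part, so that $Z$ matches the mean of $W$ exactly and its variance up to an error in $[0,1)$. By the triangle inequality,
\be{
\bbbabs{\PP[W\equiv k \pmod b]-\frac1b}\le \dtv\bclr{\law(W),\law(Z)}+\bbbabs{\PP[Z\equiv k\pmod b]-\frac1b}.
}
Because $Z$ is a deterministic integer shift of $Y$, the second term equals $\abs{\PP[Y\equiv k'\pmod b]-1/b}$ for a suitable residue $k'$.

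For the Poisson term I would invoke Theorem~\ref{thm:poisson}. If it is not already stated in this form, it follows from discrete Fourier inversion on $\Z/b\Z$:
\be{
\PP[Y\equiv k'\pmod b]-\frac1b=\frac1b\sum_{m=1}^{b-1}e^{-2\pi i mk'/b}\,e^{\lambda(e^{2\pi i m/b}-1)}.
}
Each summand has modulus $\exp(-\lambda(1-\cos(2\pi m/b)))\le\exp(-\lambda(1-\cos(2\pi/b)))$, since $1-\cos(2\pi m/b)\ge 1-\cos(2\pi/b)$ for $1\le m\le b-1$. This gives the bound $\frac{b-1}{b}\exp(-\lambda(1-\cos(2\pi/b)))$. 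Since $\lambda\ge\sigma^2=(n+1)/12$, we obtain exactly the third term of Theorem~\ref{thm:main}.

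For the total variation term I would invoke Theorem~\ref{thm:descent}, which is presumably proved via \citet[Theorem~3.1]{Rol} using Fulman's exchangeable pair $(W,W')$. In that pair, $\pi'$ is obtained from $\pi$ by a random cyclic shift, or by a random transposition-type move, chosen so that $W'-W\in\{-1,0,1\}$ and the linearity condition $\EE[W'-W\mid\pi]=-a(W-\mu)$ holds with $a$ of order $1/n$. Rollin's theorem bounds $\dtv$ by a sum of three contributions:
\begin{itemize}
\item a term of order $\sqrt{\Var\EE[(W'-W)^2\mid\pi]}/(a\sigma^2)$;
\item a smoothness term $\dtv(\law(W),\law(W+1))$, which is of order $1/\sigma$;
\item a term $O(1/\sigma^2)$ coming from the variance mismatch and the rounding in $\lambda$.
\end{itemize}
With $\sigma^2=(n+1)/12$, the first two contributions should combine to $\sqrt{23/5}/\sqrt{n+1}$ and the last to $24/(n+1)$. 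The hypothesis $n\ge6$ is presumably what allows the cruder lower-order terms to be absorbed into these constants.

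The main obstacle is the computation of $\Var\EE[(W'-W)^2\mid\pi]$. The conditional expectation is a linear combination of indicators of local patterns among adjacent values, and its variance is a sum of covariances of indicators that are dependent only when nearby. An exact count should give a closed form of order $n$, leading to the constant $23/5$. Since Theorem~\ref{thm:descent} is stated earlier in the paper, I would take it as given; the proof of Theorem~\ref{thm:main} then reduces to the two estimates above and the triangle inequality.
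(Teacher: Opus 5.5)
Your argument is correct and is essentially the paper's proof. It uses the triangle inequality through $\TP(\mu,\sigma^2)$, Theorem~\ref{thm:descent} for the total variation term, and Theorem~\ref{thm:poisson} with Poisson parameter $\sigma^2+\gamma\ge(n+1)/12$ for the residue term. Your sketch of how Theorem~\ref{thm:descent} is proved is wrong in its details, but since you take that theorem as given this does not affect your proof: the paper uses Fulman's ``move a random entry to the end'' pair with $\lambda=2/n$, and R\"ollin's bound there is $\sqrt{\Var S}/(\lambda\sigma^2)+2/\sigma^2$ with $S=\PP[W'=W+1\mid W]$, so $\sqrt{23/5}/\sqrt{n+1}$ comes from the first term alone and there is no smoothness term.
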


The proof of the above result relies on two ingredients. The first is a bound on the modular behavior of the Poisson distribution, which is of independent interest. The second is an approximation of $W$ by a translated Poisson distribution, which we obtain by applying Theorem 3.1 of \citet{Rol}.

\begin{theorem} \label{thm:poisson}
Let $X \sim \Po(\l)$ and let $b \ge 2$. Then for any
$k \in \{0,1,\dots,b-1\}$,
\bes{
  \MoveEqLeft\bbbabs{ \PP\bcls{X \equiv k  \pmod b} - \frac{1}{b} }\\
  &\leq
  \frac{1}{b} \sum_{j=1}^{b-1} \exp\bclr{-\l(1-\cos(2\pi j/b))}
  \leq
  \frac{b-1}{b}
  \exp\bclr{-\l(1-\cos(2\pi/b))}.
}
\end{theorem}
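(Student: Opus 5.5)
The plan is to use the roots-of-unity filter together with the explicit characteristic function of the Poisson distribution. Write $\omega = e^{2\pi i/b}$. Since $\frac{1}{b}\sum_{j=0}^{b-1}\omega^{jm}$ equals $1$ if $m\equiv 0 \pmod b$ and $0$ otherwise, we have
\be{
  \PP\bcls{X \equiv k \pmod b} = \EE\bbbcls{\frac{1}{b}\sum_{j=0}^{b-1}\omega^{j(X-k)}} = \frac{1}{b}\sum_{j=0}^{b-1}\omega^{-jk}\,\EE\bcls{\omega^{jX}}.
}
The $j=0$ term is exactly $1/b$. It therefore remains to control the terms with $j=1,\dots,b-1$.

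For these terms we use the probability generating function $\EE[z^X]=\exp(\l(z-1))$, valid for all complex $z$. It gives $\EE[\omega^{jX}]=\exp\bclr{\l(\cos(2\pi j/b)-1) + i\l\sin(2\pi j/b)}$, whose modulus is $\exp\bclr{-\l(1-\cos(2\pi j/b))}$. Since $\abs{\omega^{-jk}}=1$, the triangle inequality applied to the remaining sum yields the first inequality of the statement.

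For the second inequality, it suffices to show that $1-\cos(2\pi j/b)\ge 1-\cos(2\pi/b)$ for every $1\le j\le b-1$. This holds because $\cos(2\pi j/b)=\cos(2\pi \min(j,b-j)/b)$. Moreover, $2\pi\min(j,b-j)/b$ lies in $[2\pi/b,\pi]$, and cosine is decreasing on $[0,\pi]$. Each of the $b-1$ terms is therefore at most $\exp(-\l(1-\cos(2\pi/b)))$, which gives the factor $(b-1)/b$.

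There is no real obstacle. The only points needing care are the symmetry $j\leftrightarrow b-j$ in the monotonicity step, and checking that the generating-function identity extends to complex arguments. The latter is immediate from the series $\sum_m e^{-\l}\l^m z^m/m!$.
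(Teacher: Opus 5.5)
Your proposal is correct and follows essentially the same route as the paper: the roots-of-unity (character) expansion of the congruence indicator, the Poisson characteristic function giving modulus $e^{-\lambda(1-\cos(2\pi j/b))}$, and the triangle inequality. Your justification of the second inequality, via the symmetry $j\leftrightarrow b-j$ and the monotonicity of cosine on $[0,\pi]$, spells out what the paper compresses into ``bounding the sum by $b-1$ times its largest term.''
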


\begin{proof}
We use Fourier analysis on the finite abelian group $\Z/b\Z$.
For any integer-valued random variable $X$ and any $k \in \Z$,
the indicator of the congruence class $\{X \equiv k \pmod b\}$ admits
the Fourier expansion
\be{
\I\bcls{X \equiv k \pmod b}
=
\frac{1}{b}\sum_{j=0}^{b-1}
\exp\bbbclr{\frac{2\pi i j (X-k)}{b}}.
}
Indeed, for a fixed integer $x$,
\be{
\frac{1}{b}\sum_{j=0}^{b-1}
\exp\bbbclr{\frac{2\pi i j (x-k)}{b}}
=
\begin{cases}
1, & x \equiv k \pmod b,\\
0, & x \not\equiv k \pmod b,
\end{cases}
}
by orthogonality of characters of $\Z/b\Z$.
Taking expectations yields
\be{
\PP\bcls{X \equiv k \pmod b}
=
\frac{1}{b}\sum_{j=0}^{b-1}
\EE\bclc{e^{{2\pi i j (X-k)}/{b}}}.
}
The term $j=0$ equals $1/b$. For $j \neq 0$, using the characteristic
function of the Poisson distribution,
\be{
\EE\bclc{e^{2\pi i j X/b}}
=
\exp\bclr{\l\clr{e^{2\pi i j/b}-1}}.
}
Taking absolute values gives
\be{
\babs{\EE\bclc{e^{2\pi i j (X-k)/b}}}
=
\bbabs{\exp\bclr{\l\clr{e^{2\pi i j/b}-1}}}\, \babs{e^{2\pi i j k/b}} = e^{-\l(1-\cos(2\pi j/b))}.
}
Summing over $j=1,\dots,b-1$ yields the first inequality. The second inequality follows by bounding the sum by $b-1$ times its largest term.
\end{proof}

\begin{remark}
For $b=2$, the above proof gives the exact formula
\be{
\bbbabs{ \PP\cls{X \equiv k \pmod 2} - \frac{1}{2} }
=
\frac{e^{-2\l}}{2} ,
\qquad k \in \{0,1\}.
}
\end{remark}

We proceed as \citet{Rol} and introduce the following notion. An integer-valued random variable $Y$ is said to have a \emph{translated Poisson distribution} with parameters
$\mu$ and $\s^2$, written
\be{
  \law(Y) = \TP(\mu,\s^2)
}
if
\be{
  \law(Y - \mu + \s^2 + \g) = \Po(\s^2 + \g),
}
where $\g = \langle \mu - \s^2 \rangle$ denotes the fractional part of
$\mu - \s^2$, with $\langle x \rangle = x - \lfloor x \rfloor$.
In particular, $\TP(\s^2,\s^2) = \Po(\s^2)$. Approximating a random variable $W$ by $\TP(\mu,\s^2)$ allows the mean to be matched exactly. The variance, however, can only be matched up to an additive error of at most $1$, where
\be{
\s^2 \leq \Var Y = \s^2 + \g \leq \s^2 + 1.
}
This discrepancy causes no difficulty, since for the total variation distance, the resulting error is of order $\bigo(\s^{-2})$.

The next goal is to approximate $W(\pi)$, the number of descents of the inverse of $\pi$, by a translated Poisson distribution. We apply the following result, which is a special case of Theorem 3.1 of \citet{Rol}. Recall that $(W,W')$ is called an exchangeable pair if $(W,W')$ has the same distribution as $(W',W)$. We also let \be{ \dtv(P,Q) = \frac{1}{2} \sum_{x \in \cX} \abs{P(x)-Q(x)} = \max_{A \subseteq \cX} \abs{P(A)-Q(A)} } denote the total variation distance between probability distributions $P$ and $Q$ on a finite set $\cX$.

\begin{theorem}[{See \citet[Theorem 3.1]{Rol}}] \label{thm:TP}Assume that $(W,W')$ is an exchangeable pair with values in the integers, such that
\begin{itemize}
\item $\EE W=\mu$ and $\Var W=\sigma^2 < \infty$,

\item $\EE\cls{W'-\mu \given W}  = (1-\lambda)(W-\mu)$ for some $0 < \lambda < 1$,

\item $W'-W \in \{-1,0,+1\}$.
\end{itemize}
Then, with $S = S(W) = \PP\cls{W'=W+1 \given W}$,
\be{
 \dtv\bclr{\law(W),\TP(\mu,\sigma^2)} \leq \frac{\sqrt{\Var S}}{\lambda \sigma^2} + \frac{2}{\sigma^2}.
}
\end{theorem}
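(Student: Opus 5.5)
The statement is quoted from \citet{Rol}, but I would reprove it directly with the Stein–Chen method for the Poisson distribution, combined with the exchangeable pair identity. Set $s = \s^2 + \g$ and $\widetilde W = W - \mu + s$. By the definition of $\TP(\mu,\s^2)$ we have $\dtv\bclr{\law(W),\TP(\mu,\s^2)} = \dtv\bclr{\law(\widetilde W),\Po(s)}$. Moreover $\widetilde W$ is integer valued, because $\mu - \s^2 - \g = \lfloor \mu-\s^2\rfloor$ is an integer, and $\EE\widetilde W = s$.

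For $A \subseteq \Z_{\ge 0}$, let $g=g_A$ solve the Poisson Stein equation
\be{
s\,g(w+1) - w\,g(w) = \I\cls{w\in A} - \Po(s)(A),
}
extended by $g(w)=0$ for $w\le 0$. I would use the standard bounds $\norm{\Delta g}\le 1/s$ and $\norm{g}\le \min(1,s^{-1/2})$, where $\Delta g(w) = g(w+1)-g(w)$.

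The key identity comes from exchangeability. Since $(W'-W)(f(W')+f(W))$ is antisymmetric in $(W,W')$, its expectation vanishes. The linear regression condition gives $\EE\cls{(W'-W)f(W)} = -\l\,\EE\cls{(W-\mu)f(W)}$. Because $W'-W\in\{-1,0,1\}$, exchangeability also gives
\be{
\EE\cls{(W'-W)(f(W')-f(W))} = 2\,\EE\cls{S\,\Delta f(W)}.
}
To see this, the down-step term $\EE\cls{\I\cls{W'=W-1}\Delta f(W-1)}$ becomes an up-step term after swapping $W$ and $W'$. Combining the two,
\be{
\l\,\EE\cls{(W-\mu)f(W)} = \EE\cls{S\,\Delta f(W)},
}
and the choice $f(w)=w$ gives $\EE S = \l\s^2$.

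Now apply this with $f(w) = g(w-\mu+s)$. Writing the Stein operator as $s\,\Delta g(w) - (w-s)g(w)$, we get
\be{
\EE\cls{s\,\Delta g(\widetilde W) - (\widetilde W - s)g(\widetilde W)}
= \EE\bcls{(s - S/\l)\,\Delta g(\widetilde W)}
= \g\,\EE\Delta g(\widetilde W) + \EE\bcls{(\s^2 - S/\l)\Delta g(\widetilde W)}.
}
The first term is at most $\g/s \le 1/\s^2$. For the second, since $\EE S/\l = \s^2$, Cauchy–Schwarz bounds it by $\norm{\Delta g}\sqrt{\Var S}/\l \le \sqrt{\Var S}/(\l\s^2)$.

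The main obstacle is the boundary: $\widetilde W$ may take negative values, where the Stein equation with $g(w)=0$ for $w\le0$ is not satisfied. The discrepancy only involves the event $\{\widetilde W<0\}$ and the value $g(0)$ or $g(1)$. I would control $\PP\cls{\widetilde W<0}\le \PP\cls{|W-\mu|\ge s}\le \s^2/s^2\le 1/\s^2$ by Chebyshev. Together with the $\g$ term, this should produce the $2/\s^2$ in the statement. If the constants do not close cleanly, I would instead define $g$ on all of $\Z$ by solving the recursion downward, as in \citet{Rol}, so that $\norm{\Delta g}\le 1/s$ still holds globally.
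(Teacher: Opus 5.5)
Your proof is correct. The paper gives no proof of this statement, only the citation to \citet[Theorem 3.1]{Rol}, and your argument is the standard Stein--Chen/exchangeable-pair proof underlying that result: you shift to $\Po(s)$ with $s=\sigma^2+\gamma$, use the identity $\lambda\,\EE[(W-\mu)f(W)]=\EE[S\,\Delta f(W)]$, and split $2/\sigma^2$ into $\gamma/s$ plus the Chebyshev bound $\PP[\widetilde W<0]\le\sigma^2/s^2$.

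Your fallback is unnecessary. The zero extension already gives $\|\Delta g\|_\infty\le 1/s$ on all of $\mathbb{Z}$, because $\Delta g(0)=g(1)$ and $|g(1)|\le 1/s$ by the Stein equation at $w=0$. Also, contrary to your remark, the boundary discrepancy does not involve $g(0)$ or $g(1)$: for $w<0$ the equation fails by exactly $-\Po(s)(A)$, so the error is at most $\PP[\widetilde W<0]$.
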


As a consequence of Theorem \ref{thm:TP}, we obtain the following result.

\begin{theorem} \label{thm:descent}
Let $W(\pi)$ be the number of descents of $\pi^{-1}$ where $\pi$ is chosen uniformly at random from the set of
permutations of $\{1,2,\cdots,n\}$. Then for $n \geq 6$, $\mu=\frac{n-1}{2}$ and $\sigma^2=\frac{n+1}{12}$,
\be{
\dtv\bclr{\law(W),\TP(\mu,\sigma^2)} \leq \sqrt{\frac{23}{5}} \cdot \frac{1}{\sqrt{n+1}} + \frac{24}{n+1}.
}
\end{theorem}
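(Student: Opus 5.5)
We apply Theorem~\ref{thm:TP} to an exchangeable pair built from the construction of \citet{Fu}. Given $\pi$, we pick an index $I$ uniformly at random and form $\pi'$ by moving a single symbol, say composing $\pi$ with a cyclic shift $(I\; I+1\; \cdots\; n)$ or its inverse. We choose the move so that the number of $i$ for which $i$ and $i+1$ are out of order changes by at most one, and we set $W'=W(\pi')$. The first step is to check the hypotheses of Theorem~\ref{thm:TP}. Since $\pi$ is uniform and the move is a uniformly chosen bijection-preserving operation, $\pi'$ is also uniform. Exchangeability of $(W,W')$ has to be checked directly: Fulman's chain on permutations is not reversible, so we must verify that the pair of descent counts is nevertheless symmetric in law. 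If that fails, we will symmetrize by choosing the shift or its inverse with probability $1/2$ each.

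Next we compute $\mathbb{E}[W'-W\mid\pi]$. The goal is an identity $\mathbb{E}[W'-W\mid \pi]=-\lambda(W-\mu)$ with $\mu=(n-1)/2$ and $\lambda$ of order $2/n$. Concretely, we count, for each choice of $I$, whether a new inversion between consecutive values $i,i+1$ is created or destroyed. Summing over $I$ should give $\mathbb{P}[W'=W+1\mid\pi]=S$ equal to $(n-1-W)/(cn)$ up to boundary terms, and $\mathbb{P}[W'=W-1\mid\pi]=W/(cn)$. Their difference is then linear in $W$. The exact constants give $\lambda$; we also use the known value $\sigma^2=\operatorname{Var}W=(n+1)/12$, valid for $n\ge2$.

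Finally we plug into Theorem~\ref{thm:TP}. Since $2/\sigma^2=24/(n+1)$, this accounts for the second term of the bound exactly. For the first term we need
\begin{equation*}
\frac{\sqrt{\operatorname{Var} S}}{\lambda\sigma^2}\le \sqrt{\tfrac{23}{5}}\,\frac{1}{\sqrt{n+1}}.
\end{equation*}
If $S$ is an affine function of $W$ plus boundary indicators, then $\operatorname{Var}S$ is of order $\sigma^2/n^2\asymp 1/n$. Since $\lambda\sigma^2$ is of order one, this yields the $n^{-1/2}$ rate. To pin down the constant $23/5$ we compute $\operatorname{Var}S$ exactly. This involves the variance of $W$ and the covariances of the boundary indicators, such as whether $1$ or $n$ sit in particular positions, with $W$. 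These are elementary but tedious joint probabilities for uniform permutations. The hypothesis $n\ge6$ is then used to bound a rational function of $n$ by $\tfrac{23}{5}(n+1)^{-1}$ after multiplying through by $(\lambda\sigma^2)^{-2}$.

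The main obstacle is the combinatorial step. We must choose the move precisely enough that $W'-W\in\{-1,0,1\}$, the linearity condition holds exactly (not merely approximately), and exchangeability holds. Once $S$ is identified explicitly as a function of $\pi$, the variance computation and the final numerical bound are routine.
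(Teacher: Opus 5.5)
Your overall architecture matches the paper: Theorem~\ref{thm:TP} applied to Fulman's move-to-end pair, with $\lambda=2/n$, $\sigma^2=(n+1)/12$ and $2/\sigma^2=24/(n+1)$. \textbf{The gap} is in the step that carries all the content, identifying $S$ and computing its variance, which rests on a false claim.

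Moving the symbol $v=\pi(I)$ to the end changes the status of \emph{two} value pairs at once: $(v-1,v)$ becomes in order and $(v,v+1)$ becomes out of order. So $W'=W+1$ happens exactly when $v-1,v,v+1$ already appear in increasing order (for $v=1$: when $1$ precedes $2$). That is,
\[
\PP[W'=W+1\mid\pi]=\frac1n\sum_{i=1}^{n-1}X_i=\frac1n\bigl(n-1-W-V\bigr),
\]
where $X_i$ indicates that pattern and $V$ is the number of valleys of $\pi^{-1}$. This is not ``$(n-1-W)/(cn)$ up to boundary terms'': $V/n$ is a bulk statistic whose fluctuations are of the same order as those of $W/n$. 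Only the difference $\PP[W'=W+1\mid\pi]-\PP[W'=W-1\mid\pi]=(n-1-2W)/n$ is linear in $W$, and that is where $\lambda=2/n$ comes from.

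In fact your exact form cannot hold for any exchangeable $\pm1$ pair. Suppose $\PP[W'=W+1\mid W]=\frac{\lambda}{2}(n-1-W)$ and $\PP[W'=W-1\mid W]=\frac{\lambda}{2}W$. Together with $\PP[W=w,W'=w+1]=\PP[W=w+1,W'=w]$, this would force $W\sim\mathrm{Bin}(n-1,1/2)$, which the Eulerian distribution is not. Consequently, computing $\Var S$ from $\Var W$ plus boundary covariances would give leading constant $\sqrt3$, not $\sqrt{23/5}$. The stated constant cannot come out of your computation.

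Here is what the proof actually needs. First, $S=\PP[W'=W+1\mid W]=\EE\bigl[\PP[W'=W+1\mid\pi]\bigm| W\bigr]$, so $\Var S\le\Var\PP[W'=W+1\mid\pi]$; your proposal conflates conditioning on $\pi$ with conditioning on $W$. Second, the variance of $\frac1n\sum_i X_i$ must be computed exactly from pattern probabilities:
\begin{itemize}
\item $X_i$ and $X_j$ are independent for $|i-j|\ge3$, with $\EE X_iX_j=1/36$, or $1/12$ when one index is $1$;
\item $\EE X_1X_2=1/6$;
\item $\EE X_1X_3=\EE X_iX_{i+1}=1/24$;
\item $\EE X_iX_{i+2}=1/120$.
\end{itemize}
This yields exactly $\Var\PP[W'=W+1\mid\pi]=23(n+1)/(180n^2)$, which is $\frac{23}{15}\Var W/n^2$. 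Multiplying its square root by $1/(\lambda\sigma^2)=6n/(n+1)$ gives precisely $\sqrt{23/5}\,(n+1)^{-1/2}$. The hypothesis $n\ge6$ is only needed so that all index ranges in this count are nonempty, not for a final rational-function inequality.

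Finally, exchangeability needs no hedging or symmetrization. Since $\pi'$ is uniform, $W$ and $W'$ have the same law. Equal marginals plus $W'-W\in\{-1,0,1\}$ already give $\PP[W=k,W'=k+1]=\PP[W=k+1,W'=k]$ for all $k$ (Rinott--Rotar), hence exchangeability.
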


\begin{proof} We use an exchangeable pair $(W,W')$ constructed in \citet{Fu}. Letting $W(\pi)$ be the number of descents of $\pi^{-1}$, we define $W'=W'(\pi)$ as follows. Pick an integer $I$ uniformly at random between $1$ and $n$ and move the number in position $I$ in the second row of $\pi$ to the end of this second row. This results in a new permutation $\pi'$, and we define $W'(\pi)=W(\pi')$. For example, suppose that $n=7$ and $I=3$. Then the permutation $\pi$ in two-line form is

\ba{
i & = \ 1 \ 2 \ 3 \ 4 \ 5 \ 6 \ 7 \\
\pi(i) & = \ 6 \ 4 \ 1 \  5 \ 3 \ 2 \ 7
}
is transformed to
\ba{
i & = \ 1 \ 2 \ 3 \ 4 \ 5 \ 6 \ 7 \\
\pi'(i) & = \ 6 \  4 \ 5 \ 3 \ 2 \ 7  \ 1.
}
\citet{Fu} proved that despite the non-reversibility of the ``move random to end'' Markov chain, the pair $(W,W')$ is exchangeable. The exchangeability also follows from Lemma 1.1 of \citet{RR} and the fact that $W'-W \in \{-1,0,+1\}$.

It is well known that $\EE W=(n-1)/2$ and $\Var W=(n+1)/12$. Clearly $W'-W \in \{-1,0,+1\}$. Moreover, Lemma 1 of \citet{Fu}
yields \be{ \EE\cls{W'-\mu \given W}  = (1-\lambda)(W-\mu) } for $\lambda=2/n$. Thus the hypotheses of Theorem \ref{thm:TP} are satisfied.

To apply Theorem \ref{thm:TP} with $\lambda=2/n$ and $\sigma^2=(n+1)/12$, it remains only to bound $\Var S$. Since $W$ is a function of $\pi$, it follows from the standard conditional variance decomposition of $\PP\cls{W'=W+1 \given \pi}$ and the projection property that
\be{
  \Var S \leq \Var\bclr{\PP\cls{W'=W+1 \given \pi}}.
}
Clearly,
\be{
  \PP\cls{W'=W+1 \given \pi} = \frac{1}{n} \bclr{ X_1(\pi) + X_2(\pi) + \cdots + X_{n-1}(\pi)},
}
where
\be{X_1(\pi) =
\begin{cases}
 1 &\text{if $1$ and $2$ are in order in $\pi$,}\\
 0 &\text{else,}
\end{cases}
}
and for $2 \leq i \leq n-1$,
\be{
X_i(\pi) = \begin{cases}
1 &\text{if $i-1,i,i+1$ are in order in $\pi$,}\\
0 &\text{else.}
\end{cases}
}
Thus,
\be{ \EE\bclc{\PP\cls{W'=W+1 \given \pi}} = \frac{1}{n} \clr{\EE X_1+ \cdots + \EE X_{n-1}} = \frac{1}{n} \bbbclr{ \frac{1}{2} + \frac{n-2}{6} } = \frac{n+1}{6n}.}
Now for $n \geq 6$, one has that
\be{
  \EE\bclc{\PP\cls{W'=W+1 \given \pi}^2} = \frac{1}{n^2} \clr{T_1 + T_2 + T_3 + T_4 + T_5},
} where
\ba{
T_1 & = \EE\clc{X_1^2} + \cdots + \EE\clc{X_{n-1}^2} = \EE X_1 + \cdots + \EE X_{n-1} = \frac{n+1}{6},\\
\bs{
T_2 & = 2\EE\clc{X_1X_2} + 2\EE\clc{X_1X_3} + 2\EE\clc{X_1X_4} + \cdots + 2\EE\clc{X_1X_{n-1}} \\
& = \frac{2}{6} + \frac{2}{24} + \frac{2(n-4)}{12},
}\\
T_3 &= 2 \sum_{i=2}^{n-2} \EE\clc{X_i X_{i+1}} = \frac{2(n-3)}{24},
\qquad T_4 = 2 \sum_{i=2}^{n-3} \EE\clc{X_i X_{i+2}} = \frac{2(n-4)}{120},\\
T_5 &= 2 \sum_{i=2}^{n-4} \sum_{j=i+3}^{n-1} \EE\clc{X_iX_j} = \frac{2}{36} \sum_{i=2}^{n-4} \sum_{j=i+3}^{n-1} 1
= \frac{2}{36} \frac{(n-4)(n-5)}{2}.
}
Thus, straightforward algebraic manipulation yields
\be{
  \Var\bclr{\PP\cls{W'=W+1 \given \pi}} =  \frac{1}{n^2} \clr{T_1 + T_2 + T_3 + T_4 + T_5}  - \bbbclr{\frac{n+1}{6n}}^2 = \frac{23(n+1)}{180n^2}.
}
Plugging into Theorem \ref{thm:TP} completes the proof.
\end{proof}

We can now give the proof of our main Stein's method result.

\begin{proof}[Proof of Theorem~\ref{thm:main}]
Since $W$ is the number of descents of the inverse of a uniform random permutation, $\PP\bcls{W = r} = A(n,r)/n!$ for each $r$, and hence $\PP\bcls{W \equiv k \pmod b} = \frac{1}{n!}\sum_{r \equiv k \pmod b} A(n,r)$.
Let $Y\sim\TP(\mu,\sigma^2)$, where $\mu = (n-1)/2$ and $\sigma^2 = (n+1)/12$. By the triangle inequality,
\be{
  \bbbabs{\PP\bcls{W \equiv k \pmod b} - \frac{1}{b}}
  \leq \dtv\bclr{\law(W), \TP(\mu,\sigma^2)} + \bbbabs{\PP\bcls{Y \equiv k \pmod b} - \frac{1}{b}}.
}
The first term is bounded by $\sqrt{23/5} \cdot (n+1)^{-1/2} + 24/(n+1)$ by Theorem~\ref{thm:descent}. For the second term, note that $Y$ shifted by $\mu - \sigma^2 - \gamma$ has distribution $\Po(\sigma^2 + \gamma)$, where $\g = \langle \mu - \s^2 \rangle\in[0,1)$ and $\sigma^2 \leq \sigma^2 + \gamma < \sigma^2 + 1$, so applying Theorem~\ref{thm:poisson} with $\lambda = \sigma^2 + \gamma \geq (n+1)/12$ yields a bound of
\be{
  \frac{b-1}{b} \exp\bbclr{-\frac{n+1}{12}\bclr{1 - \cos(2\pi/b)}}.
}
Combining the two bounds completes the proof.
\end{proof}

\section{An error bound via Fourier analysis}\label{sec:fourier}

A direct Fourier-analytic argument gives the following sharper bound. It makes, however, use of a representation of $A(n,r)$ given by \citet{Tan}, and is therefore not robust to generalisation and does not give total variation bounds for the distribution of~$W$.

\begin{theorem}\label{thm:fourier}
For $n\geq 1$, $b \geq 2$, and $k \in \{0,1,\dots,b-1\}$,
\bes{
\bbbabs{\frac{1}{n!}\sum_{r\equiv k\pmod b} A(n,r) - \frac{1}{b}}
\leq \frac{1}{b}\sum_{j\neq 0}\bbabs{\frac{b\sin(\pi j/b)}{\pi j}}^{n+1}.
}
\end{theorem}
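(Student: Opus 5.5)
The plan is to use Tanny's representation of the descent count through uniform random variables and then apply Fourier analysis on $\Z/b\Z$. Let $U_0,U_1,\dots,U_n$ be i.i.d.\ $\mathrm{Uniform}[0,1]$. The relevant form of \citet{Tan} is that $A(n,r)/n!$ equals the probability that $\lfloor U_0+U_1+\cdots+U_n\rfloor = r$, with $n+1$ summands. This matches the exponent $n+1$ in the bound. Indeed, $S=U_1+\cdots+U_n$ has $\PP[k\le S<k+1]=A(n,k)/n!$. Adding an independent $U_0$ and conditioning on $S$, the fractional part $\langle S+U_0\rangle$ is uniform and independent of $S$. One then checks, via the standard bijection underlying Tanny's argument, that $\lfloor S+U_0\rfloor$ has the same law as $\lfloor S\rfloor$. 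I will quote this representation from \citet{Tan} rather than reprove it.

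Set $T=U_0+\cdots+U_n$ and $X=\lfloor T\rfloor$, so that
\be{\PP[X\equiv k \pmod b]=\frac1b\sum_{j=0}^{b-1}\EE\bclc{e^{2\pi i j(X-k)/b}},}
exactly as in the proof of Theorem~\ref{thm:poisson}. The difficulty is that $X$ is a floor, not a sum of independent variables, so its characteristic function does not factor. To handle this, I would use the periodised density. Let $f$ be the density of $T$. Then
\be{\PP[X\equiv k\pmod b]=\int_k^{k+1} g(t)\,dt, \qquad g(t)=\sum_{m\in\Z} f(t+mb).}
The function $g$ is $b$-periodic, and its Fourier coefficients are values of the characteristic function of $T$ at frequencies $2\pi j/b$. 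By Poisson summation,
\be{g(t)=\frac1b\sum_{j\in\Z}\varphi\bclr{2\pi j/b}\, e^{-2\pi i j t/b}, \qquad \varphi(s)=\EE e^{isT}=\bbclr{\frac{e^{is}-1}{is}}^{n+1}.}
Since $\abs{\varphi(2\pi j/b)}=\abs{\sin(\pi j/b)/(\pi j/b)}^{n+1}=\abs{b\sin(\pi j/b)/(\pi j)}^{n+1}$, the series converges absolutely for $n\ge1$. This justifies pointwise summation, or at least $L^1$ convergence, which is all we need after integrating.

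Integrating over $[k,k+1]$, the $j=0$ term gives $\frac1b\int_k^{k+1}dt=1/b$. For $j\ne0$, the integral of $e^{-2\pi i jt/b}$ over an interval of length one has modulus at most $1$. Hence
\be{\bbbabs{\PP[X\equiv k\pmod b]-\frac1b}\le\frac1b\sum_{j\neq0}\abs{\varphi(2\pi j/b)},}
which is the claimed bound.

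The sum over $j\neq 0$ runs over all nonzero integers, including multiples of $b$, whose terms vanish. One could sharpen the bound by the factor $\abs{\sin(\pi j/b)}/(\pi j/b)$ coming from the interval integral, but it is not needed.

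The main obstacle is stating Tanny's representation correctly, in particular that it uses $n+1$ uniforms rather than $n$. I would cross-check it on small cases: for $n=1$, $U_0+U_1\in[0,1)$ with probability $1/2$, which does not match $A(1,0)=1$. This shows the correct statement needs care, probably involving an indexing or shift convention. If the clean version fails, I would use the wrapped version $\lfloor S\rfloor\bmod b$ with $n$ uniforms. Pairing it with one extra independent uniform to smooth the floor yields the same Poisson summation computation with $\varphi(s)^{n}\cdot\frac{e^{is}-1}{is}$, hence again exponent $n+1$.
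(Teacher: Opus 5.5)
There is a genuine gap at the first step. The identity $A(n,r)/n!=\PP[\lfloor U_0+\cdots+U_n\rfloor=r]$ is false, as your own check at $n=1$ shows. So is the supporting claim that $\lfloor S+U_0\rfloor$ has the same law as $\lfloor S\rfloor$. Although $\langle S+U_0\rangle$ is uniform and independent of $S$, the variable $\lfloor S+U_0\rfloor$ is the floor of a sum of $n+1$ uniforms and therefore has law $A(n+1,\cdot)/(n+1)!$. Your main computation integrates the periodised density of $T$ over $[k,k+1]$ and bounds the interval factor by $1$. What it actually proves is the statement for permutations of $n+1$ symbols with the exponent reduced by one, not the theorem for $n$. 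The fallback does not repair this as written. The variable $\lfloor S\rfloor+U_0$ has characteristic function $\EE e^{is\lfloor S\rfloor}\cdot(e^{is}-1)/(is)$, which does not factor as $((e^{is}-1)/(is))^{n+1}$ because $\lfloor S\rfloor$ is not a sum of independent terms. Running \emph{the same Poisson summation computation} for $S+U_0$ instead simply reinstates the false identity.

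The missing point is that the factor you called unnecessary is exactly what supplies the $(n+1)$-st power. Use the correct representation $\PP[\lfloor S\rfloor\equiv k\pmod b]=\int_k^{k+1}g_S(t)\,dt$ with $S=U_1+\cdots+U_n$. The Fourier coefficients of $g_S$ have modulus $b^{-1}\lvert b\sin(\pi j/b)/(\pi j)\rvert^{n}$, and
\[
\Bigl\lvert\int_k^{k+1}e^{-2\pi i jt/b}\,dt\Bigr\rvert=\Bigl\lvert\frac{b\sin(\pi j/b)}{\pi j}\Bigr\rvert,
\]
so the product gives exactly the claimed terms. This is the paper's proof, phrased there as pairing the Fourier series of $J(x)=\I[x\bmod b\in[k,k+1)]$ with the characteristic function of $S_n$. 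For $n=1$ the series for $g_S$ is not absolutely summable, so pointwise summation is unavailable. Use Parseval in $L^2$ instead, or Fej\'er means with bounded convergence as the paper does; the paired coefficients are $O(\lvert j\rvert^{-2})$.

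If you prefer $n+1$ uniforms, the correct version is $A(n,r)/n!=f_T(r+1)$, where $f_T$ is the density of $T$. This holds because $\PP[S\in[r,r+1)]$ is the density of $S-U_0\stackrel{d}{=}T-1$ at $r$. This is presumably the smoothing you had in mind. It turns $\frac{1}{n!}\sum_{r\equiv k\pmod b}A(n,r)$ into the point value $g_T(k+1)$, not an integral over $[k,k+1]$. Your absolutely convergent Poisson summation formula is valid pointwise, since $g_T$ is continuous for $n\ge1$, and then yields the bound at once.
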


\begin{proof}
Using the representation appearing in \citet{Tan}, we have $A(n,r)/n! = \PP\bcls{\lfloor S_n\rfloor = r}$, where $S_n = U_1 + \cdots + U_n$ and $U_1,\dots,U_n$ are i.i.d.\ Uniform$[0,1]$. Hence, for $k \in \{0,1,\dots,b-1\}$,
\be{
\frac{1}{n!}\sum_{r\equiv k\pmod b} A(n,r) = \PP\bcls{\lfloor S_n\rfloor \equiv k \pmod b} = \EE{J(S_n)},
}
where $J \colon \mathbb{R}\to\{0,1\}$ is defined by $J(x) = \I\bcls{x \bmod b \in [k,k+1)}$. The function $J$ admits the Fourier expansion
\be{
J(v) = \sum_{j\in\Z} c_j\, e^{2\pi i jv/b}
}
with Fourier coefficients
\be{
c_j = \frac{1}{b}\int_0^b J(v)\, e^{-2\pi ijv/b}\,dv = \frac{1}{b}\int_k^{k+1} e^{-2\pi ijv/b}\,dv.
}
A direct computation gives $c_0 = 1/b$ and
\be{
|c_j| = \frac{|\sin(\pi j/b)|}{\pi |j|}, \qquad j\neq 0.
}
Since $U \sim \mathrm{Uniform}[0,1]$ has characteristic function $\phi_U(t) = (e^{it}-1)/(it)$, the independence of $U_1,\dots,U_n$ gives
\be{
\babs{\EE{e^{2\pi i j S_n/b}}} = |\phi_U(2\pi j/b)|^n = \bbabs{\frac{b\sin(\pi j/b)}{\pi j}}^n.
}
The product $|c_j|\cdot|\phi_{S_n}(2\pi j/b)| = \frac{1}{b}|b\sin(\pi j/b)/(\pi j)|^{n+1}$ is of order $\bigo(|j|^{-(n+1)})$ and therefore summable. By Fej\'er's theorem (the Ces\`aro means of the Fourier series of $J$ are bounded by $\|J\|_\infty = 1$ and converge to $J$ almost everywhere) together with bounded convergence,
\be{
\EE{J(S_n)} = \sum_{j\in\Z} c_j\, \EE {e^{2\pi i j S_n/b}}.
}
The $j = 0$ contribution equals $1/b$, and the triangle inequality yields
\be{
\bbbabs{\EE{J(S_n)} - \frac{1}{b}}
\leq \sum_{j\neq 0}|c_j|\cdot |\phi_{S_n}(2\pi j/b)|= \frac{1}{b}\sum_{j\neq 0}\bbabs{\frac{b\sin(\pi j/b)}{\pi j}}^{n+1},
}
which is the claimed bound.
\end{proof}

\begin{corollary}\label{cor:fourier-easy}
For $n\geq 1$, $b\geq 2$, and $k \in \{0,1,\dots,b-1\}$,
\bes{
\bbbabs{\frac{1}{n!}\sum_{r\equiv k\pmod b}A(n,r) - \frac{1}{b}} \leq \frac{b-1}{b}\,\rho(b)^{n-1},
}
where $\rho(b) := b\sin(\pi/b)/\pi \in (0,1)$. Moreover,
\be{
\rho(b)^{n-1} \leq \exp\bbclr{-\frac{(n-1)\pi^2}{8b^2}}.
}
\end{corollary}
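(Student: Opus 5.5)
The plan is to start from the bound of Theorem~\ref{thm:fourier} and estimate the series $\frac{1}{b}\sum_{j\neq 0} a_j^{n+1}$, where $a_j := \abs{b\sin(\pi j/b)/(\pi j)}$. The idea is to peel off two powers of $a_j$, which sum exactly to something simple, and bound the remaining $n-1$ powers uniformly by $\rho(b)^{n-1}$.

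\emph{Step 1 (uniform bound).} I will show $a_j \le \rho(b)$ for all integers $j\neq 0$. This follows from the elementary inequality $\abs{\sin(j\theta)} \le \abs{j}\,\abs{\sin\theta}$ for integer $j$. It is proved by induction from $\sin((j+1)\theta)=\sin(j\theta)\cos\theta+\cos(j\theta)\sin\theta$, and is symmetric in $j\mapsto -j$. Taking $\theta=\pi/b$ gives $\abs{\sin(\pi j/b)}/\abs{j} \le \sin(\pi/b)$, which is the claim. Also $\rho(b)\in(0,1)$, since $0<\sin x<x$ for $x=\pi/b\in(0,\pi/2]$.

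\emph{Step 2 (grouping by residues).} Terms with $j\equiv 0 \pmod b$ vanish. For the others, group $j$ by its residue $m\in\{1,\dots,b-1\}$, writing $j=m+b\ell$ with $\ell\in\Z$. Then $\abs{\sin(\pi j/b)}=\sin(\pi m/b)$, and
\be{
\sum_{\ell\in\Z} a_{m+b\ell}^2 = \frac{b^2\sin^2(\pi m/b)}{\pi^2}\sum_{\ell\in\Z}\frac{1}{(m+b\ell)^2} = \frac{\sin^2(\pi m/b)}{\pi^2}\sum_{\ell\in\Z}\frac{1}{(m/b+\ell)^2} = 1,
}
using the classical partial-fraction identity $\sum_{\ell\in\Z}(x+\ell)^{-2}=\pi^2/\sin^2(\pi x)$ for $x\notin\Z$. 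Hence each residue class contributes
\be{
\sum_{\ell} a_{m+b\ell}^{n+1}\le \rho(b)^{n-1}\sum_\ell a_{m+b\ell}^2=\rho(b)^{n-1}.
}
Summing over the $b-1$ nonzero residues and multiplying by $1/b$ yields the first claim, including the case $n=1$.

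\emph{Step 3 (exponential bound).} Put $x=\pi/b\in(0,\pi/2]$. By the alternating Taylor series,
\be{
\frac{\sin x}{x}\le 1-\frac{x^2}{6}+\frac{x^4}{120}\le 1-\frac{x^2}{8},
}
where the last step needs $x^4/120\le x^2/24$, i.e.\ $x^2\le 5$, which holds. Then $1-y\le e^{-y}$ gives $\rho(b)\le \exp(-\pi^2/(8b^2))$, and raising to the power $n-1$ finishes the proof.

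The main obstacle is Step 2. The naive estimate $a_j\le \rho(b)$ for every term does not give a summable series, so the whole argument rests on spotting that the squared terms in each residue class sum to exactly $1$ via the cosecant-squared identity. The rest is routine.
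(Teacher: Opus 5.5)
Your proof is correct and has the same skeleton as the paper's. Both arguments bound $a_j^{n+1}\le\rho(b)^{n-1}a_j^2$, use $\sum_{j\neq 0}a_j^2=b-1$, and finish with the Taylor estimate $\sin u\le u-u^3/6+u^5/120\le u-u^3/8$.

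The differences are local. The paper gets $\sum_{j\neq 0}a_j^2=b-1$ from Parseval applied to the indicator $J$, using $a_j=b|c_j|$; this is equivalent to your residue-class use of the cosecant-squared identity. The paper gets $a_j\le\rho(b)$ from monotonicity of $\sin x/x$ on $(0,\pi]$ when $1\le|j|\le b$, plus the crude bound $a_j\le b/(\pi|j|)<1/\pi\le\rho(b)$ when $|j|>b$, whereas your inequality $|\sin(j\theta)|\le|j|\,|\sin\theta|$ handles all $j$ at once.
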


\begin{proof}
The function $\sin(x)/x$ is decreasing on $(0,\pi]$, so $|b\sin(\pi j/b)/(\pi j)|\leq \rho(b)$ for $1\leq |j|\leq b$. For $|j|>b$, we have $|b\sin(\pi j/b)/(\pi j)|\leq b/(\pi|j|) < 1/\pi \leq \rho(b)$. Hence, for $n\geq 1$,
\be{
\bbabs{\frac{b\sin(\pi j/b)}{\pi j}}^{n+1} \leq \rho(b)^{n-1}\,\bbabs{\frac{b\sin(\pi j/b)}{\pi j}}^2.
}
Parseval's identity applied to $J$ gives $\sum_{j\in\Z}|c_j|^2 = b^{-1}\int_0^b |J(v)|^2\,dv = 1/b$, so $\sum_{j\neq 0}|c_j|^2 = (b-1)/b^2$, and since $|b\sin(\pi j/b)/(\pi j)| = b|c_j|$ for $j\neq 0$, we have
\be{
\sum_{j\neq 0}\bbabs{\frac{b\sin(\pi j/b)}{\pi j}}^2 = b^2\sum_{j\neq 0}|c_j|^2 = b-1.
}
Combining these inequalities with Theorem~\ref{thm:fourier} yields the first inequality.

For the second inequality, set $u = \pi/b \in (0,\pi/2]$. The alternating Taylor expansion of $\sin$ gives $\sin u \leq u - u^3/6 + u^5/120$, and on $(0,\pi/2]$ this is in turn bounded by $u - u^3/8$, as is easily verified. Hence $1 - \rho(b) = 1 - \sin(u)/u \geq u^2/8 = \pi^2/(8b^2)$, and the bound $1 + x \leq e^x$ with $x = \rho(b) - 1$ yields $\rho(b)^{n-1} \leq e^{-(n-1)(1-\rho(b))} \leq \exp\bclr{-(n-1)\pi^2/(8b^2)}$.
\end{proof}

\begin{remark}
For $b=2$ Corollary \ref{cor:fourier-easy} recovers Alqui\'{e}'s sharp $\bigo\bclr{(2/\pi)^n}$ result.
\end{remark}

\begin{remark}
In the borderline regime $b \asymp \sqrt{n}$, Corollary~\ref{cor:fourier-easy} no longer yields convergence to $0$. Indeed, if $b = c\sqrt{n}$, then $\rho(b)^{n-1}$ tends to the positive constant $\exp\bclr{-\pi^2/(6c^2)}$. In this case, we can apply Theorem~\ref{thm:fourier} directly: Each term $|b\sin(\pi j/b)/(\pi j)|^{n+1}$ converges to $\exp\bclr{-\pi^2 j^2/(6c^2)}$, so the sum is $\bigo(1)$ and the resulting bound is $\bigo(1/b) = \bigo(1/\sqrt{n})$.
\end{remark}

\section*{Acknowledgements}
Fulman was supported by Simons Foundation grant 917224.

\end{document}